\pgfplotsset{compat=1.15}
\newtheorem{theorem}{\indent Theorem}[section]
\newtheorem{lemma}[theorem]{\indent Lemma}
\newtheorem{proposition}[theorem]{\indent Proposition}
\newtheorem{definition}{\indent Definition}[section]
\newtheorem{remark}{\indent Remark}[section]
\newtheorem{assumption}[theorem]{Assumption}%
\date{\today}
\begin{document}
	\title{Smoothing Accelerated Proximal Gradient Method with Backtracking for Nonsmooth Multiobjective Optimization}
	\author{Huang Chengzhi}
	\maketitle
\section{Abstract}
For the composite multi-objective optimization problem composed of two nonsmooth terms, a smoothing method is used to overcome the nonsmoothness of the objective function, making the objective function contain at most one nonsmooth term. Then, inspired by the design idea of the aforementioned backtracking strategy, an update rule is proposed by constructing a relationship between an estimation sequence of the Lipschitz constant and a smoothing factor, which results in a backtracking strategy suitable for this problem, allowing the estimation sequence to be updated in a non-increasing manner. On this basis, a smoothing accelerated proximal gradient algorithm based on the backtracking strategy is further proposed. Under appropriate conditions, it is proven that all accumulation points of the sequence generated by this algorithm are weak Pareto optimal solutions. Additionally, the convergence rate of the algorithm under different parameters is established using a utility function. Numerical experiments show that, compared with the subgradient algorithm, the proposed algorithm demonstrates significant advantages in terms of runtime, iteration count, and function evaluations.

\section{Introduction}
In real-world production problems, situations often arise where multiple objectives must be handled simultaneously. These objectives may either complement each other or conflict with one another. Therefore, providing an optimal multi-perspective decision-making solution that balances all objectives is a topic worthy of in-depth research. Multi-objective optimization is one of the key approaches to studying and solving this problem. As the name implies, it refers to optimization problems involving at least two or more objective functions, typically formulated as:  

\[
\min_{x \in X} F(x),
\]

where \( x \in \mathbb{R}^n \) represents the decision variables, \( F := (f_1, \cdots, f_m)^T \) (where \( T \) denotes transposition) is a vector-valued function, \( F: \mathbb{R}^n \to \mathbb{R} \) is the objective function, and \( X \subseteq \mathbb{R}^n \) is the feasible domain for \( x \). If \( X = \mathbb{R}^n \), then Equation \eqref{def of mop} represents an unconstrained optimization problem; otherwise, it is a constrained optimization problem.  

Since it is generally impossible to find a solution that optimizes all objective functions simultaneously, the concept of Pareto efficient solutions, Pareto weakly efficient solutions, and Pareto critical efficient solutions has been introduced to address multi-objective optimization problems.  

Currently, algorithms for solving multi-objective optimization problems can be broadly classified into three categories: scalarization methods, evolutionary algorithms, and direct solution methods.  

Scalarization methods assign predefined weights to objective function components, transforming the problem into an equivalent single-objective optimization problem. However, these methods require prior determination of weights and are highly dependent on the nature of the problem. If the problem's characteristics are unclear, it becomes difficult to assign appropriate weights, leading to suboptimal solutions. Evolutionary algorithms offer fast computation but lack convergence guarantees and perform poorly on large-scale problems. As a result, researchers have shifted their focus toward direct solution methods.  

Direct solution methods, typically referring to gradient-based descent algorithms, integrate traditional single-objective optimization techniques into multi-objective optimization. By restructuring subproblems according to the multi-objective problem’s structure, these methods yield precise solutions. They do not require parameter preselection and provide convergence guarantees. This paper primarily analyzes the theoretical properties and numerical performance of a class of descent algorithms for solving multi-objective optimization problems.  

Among the various forms of multi-objective optimization problems, composite unconstrained multi-objective optimization problems hold significant theoretical and practical research value. These problems are widely applied in fields such as machine learning and image restoration \cite{binder2020multiobjective, gong2015unsupervised, stadler1998multicriteria} and play an essential role in multi-objective optimization theory, such as transforming constrained multi-objective optimization problems into unconstrained problems using penalty function techniques \cite{huang2002nonlinear} and robust multi-objective optimization \cite{fukuda2014survey, tanabe2019proximal}.  

Due to the structural characteristics of composite multi-objective optimization problems, when the objective function consists of the sum of smooth and nonsmooth functions, Tanabe et al. \cite{tanabe2019proximal} first proposed an effective method for solving such problems—the multi-objective proximal gradient method. This method combines ideas from the multi-objective steepest descent algorithm \cite{fliege2000steep} and the multi-objective proximal point algorithm \cite{bonnel2005proximal}. The smooth function component \( (f_1, \cdots, f_m)^T \) is handled using a gradient descent approach from the multi-objective steepest descent algorithm, while the nonsmooth function component \( (g_1, \cdots, g_m)^T \) is treated using an implicit gradient descent approach from the multi-objective proximal point algorithm. The update direction is determined by solving a min-max subproblem, and an appropriate step-size strategy is designed for iteration until a Pareto optimal solution is found. However, this method cannot handle cases where the Lipschitz constant of the gradient of the smooth objective function is unknown.  

Furthermore, if the problem consists of the sum of two nonsmooth functions, it is referred to as a nonsmooth composite multi-objective optimization problem, which is common in machine learning, image restoration, and multi-task learning. Traditional approaches either relax the objective function to obtain an approximate solution or use subgradient methods to replace gradients for optimization. The first approach is unsuitable for cases requiring high-precision solutions, while the second suffers from high computational complexity due to the difficulty of calculating subgradients. Hence, a new approach is needed to overcome these limitations. The smoothing method in single-objective optimization provides a feasible direction.  

To address these limitations, this paper first constructs a new Lipschitz update criterion and derives a novel backtracking strategy to propose a backtracking-accelerated proximal gradient algorithm for solving smooth composite multi-objective optimization problems. Furthermore, leveraging the smoothing method and the previously mentioned backtracking strategy, this paper also proposes a backtracking-accelerated proximal gradient algorithm for solving nonsmooth composite multi-objective optimization problems.  

Tanabe et al. \cite{tanabe2023convergence} analyzed the convergence rate of the multi-objective proximal gradient algorithm by introducing a utility function, proving that the algorithm converges at different rates under various conditions: for nonconvex problems (\( O(1/\sqrt{k}) \)), convex problems (\( O(1/k) \)), and strongly convex problems (\( O(r^k) \), where \( r \in (0, 1) \)). To further improve computational efficiency, Zhao et al. \cite{zhao2024convergence} proposed a new Armijo line search strategy, proving that in the nonconvex case, every cluster point (if it exists) of the algorithm sequence is a Pareto stationary point. Under convexity assumptions, the sequence converges to a weak Pareto optimal solution, achieving the same convergence rate as in \cite{tanabe2023convergence}. Additionally, Chen et al. \cite{chen2023proximal} proposed a Bregman-distance-based multi-objective proximal gradient algorithm, proving that the generated sequence converges to a Pareto stationary point regardless of whether a constant step size or a backtracking step size is used. The algorithm retains the same convergence rate as in \cite{tanabe2023convergence}. Moreover, Chen et al. \cite{chen2023bb}, considering the unique characteristics of multi-objective optimization (i.e., the imbalance among objective functions), proposed a Barzilai-Borwein-based proximal gradient multi-objective algorithm (BBPGMO) and demonstrated that BBPGMO achieves a faster convergence rate than the algorithm in \cite{tanabe2023convergence}.  

To further accelerate the convergence of the multi-objective proximal gradient method, Tanabe et al. \cite{tanabe2023accelerated} introduced acceleration techniques into the multi-objective proximal gradient framework, proposing a multi-objective accelerated proximal gradient algorithm (mFISTA) and proving its sublinear convergence rate in the convex case.  

However, similar to single-objective optimization, the function value may increase during mFISTA iterations, and the inexact computation of subproblems may lead to divergence. To address this issue, Tanabe et al. \cite{nishimura2024monotonicity} enforced function value monotonicity and proposed a multi-objective monotone accelerated proximal gradient method, proving that it achieves the same convergence rate as mFISTA in the convex case. Additionally, Zhang et al. \cite{zhang2023convergence} combined the multi-objective proximal gradient method \cite{tanabe2023accelerated} with Nesterov’s acceleration techniques, proposing a multi-objective accelerated proximal gradient algorithm with a faster convergence rate than \( O(1/k^2) \).  

Despite significant progress in multi-objective accelerated proximal gradient algorithms, several challenges remain:  

1) The true Lipschitz constant of the smooth function gradient is difficult to obtain. The subproblem in these algorithms requires a predefined Lipschitz constant, which is usually unknown. Incorrect selection can lead to non-convergence or inefficiency due to overly small step sizes.  

2) Backtracking strategies for estimating the Lipschitz constant may reduce computational efficiency or even fail to ensure convergence. The common backtracking approach leads to a monotonically increasing estimate, increasing computational difficulty in later iterations and reducing the algorithm’s solving speed.  

Inspired by Scheinberg et al. \cite{scheinberg2014fast}, this paper proposes a new backtracking-based multi-objective accelerated proximal gradient algorithm. The strategy estimates the Lipschitz constant locally and allows for local adjustments, ensuring convergence while achieving better performance than mFISTA. Numerical experiments demonstrate its advantages.

\section{Preliminary Results}
This paper focuses predominantly on composite nonsmooth multiobjective optimization, expressed as:
\begin{equation}\label{1}
	\min_{x \in \mathbb{R}^{n}} F(x)
\end{equation}
with $F : \mathbb{R}^n \to (\mathbb{R} \cup \{ \infty\})^{m}$ and $F := (F_1,\dotsb,F_{m})^{T}$ taking the form
\begin{equation}\label{2}
	F_{i}(x) := f_{i}(x) + g_{i}(x), i = 1,2,\dotsb,m,
\end{equation}
where, $f_{i}:\mathbb{R}^{n} \to \mathbb{R}$ represents a convex but nonsmooth function, and
$g_{i}:\mathbb{R}^{n} \to \mathbb{R}$ is a closed, proper, and convex function,which may not be nonsmooth.

In the context of this study, we introduce an algorithm utilizing the smoothing function delineated in \cite{chen2012smoothing}. This smoothing function serves the purpose of approximating the nonsmooth convex function \(f\) by a set of smooth convex functions, thereby facilitating the application of gradient-based optimization techniques.
\begin{definition}[\cite{chen2012smoothing}]\label{def1}
	For convex function $f$ in (\ref{2}), we call $\tilde{f}:\mathbb{R}^{n} \times \mathbb{R}_{+} \to \mathbb{R}$ a smoothing
	function of $f$, if $\tilde{f}$ satisfies the following conditions:
	
	(i) for any fixed $\mu > 0$,$\tilde{f}( \cdot, \mu)$ is continuously differentiable on $\mathbb{R}^{n}$;
	
	(ii) $\lim_{z \to x,\mu \downarrow 0}\tilde{f}(z,\mu) = f(x), \forall x \in \mathbb{R}^{n}$;
	
	(iii) (gradient consistence) $\{\lim_{z \to x,\mu \downarrow 0}\tilde{c}(z,\mu)\} \subseteq \partial f(x), \forall x \in \mathbb{R}^{n}$ ;
	
	(iv) for any fixed $\mu >0$, $\tilde{f}(z,\mu)$ is convex on $\mathbb{R}^{n}$;
	
	(v) there exists a $k > 0$ such that
	$$|\tilde{f}(x,\mu_2) - \tilde{f}(x,\mu_1)| \leq \kappa|\mu_1-\mu_2|, \forall x \in \mathbb{R}^{n}, \mu_1,\mu_2 \in \mathbb{R}_{++};$$
	
	(vi) there exists an $L > 0$ such that $\nabla_{x}\tilde{f}(\cdot,\mu)$ is Lipschitz continuous on $\mathbb{R}^{n}$ with
	factor $L\mu^{-1}$ for any fixed $\mu \in \mathbb{R}_{++}$.
\end{definition}

Combining properties (ii) and (v) in Definition (\ref{def1}), we have

$$|\tilde{f}(x,\mu) - f(x)| \leq \kappa\mu,\forall x \in \mathbb{R}^{n},\mu \in \mathbb{R}_{++}.$$

We now revisit the optimality criteria for the multiobjective optimization problem denoted as (\ref{1}). An element \(x^{*} \in \mathbb{R}^{n}\) is deemed weakly Pareto optimal if there does not exist \(x \in \mathbb{R}^{n}\) such that \(F(x) < F(x^{*})\), where \(F: \mathbb{R}^{n} \to \mathbb{R}^{m}\) represents the vector-valued objective function. The ensemble of weakly Pareto optimal solutions is denoted as \(X^{*}\). The merit function \(u_{0}: \mathbb{R}^{n} \to \mathbb{R} \cup \{\infty\}\), as introduced in \cite{tanabe2023new}, is expressed in the following manner:
\begin{equation}\label{3}
	u_0(x) := \sup_{z \in \mathbb{R}^{n}} \min_{i =1,\dotsb,m}[F_{i}(x) - F_{i}(z)].
\end{equation}
The following lemma proves that $u_0$ is a merit function in the Pareto sense.
\begin{lemma}[\cite{tanabe2023accelerated}]
	Let $u_0$ be given as (\ref{3}), then $u_0(x) \geq 0, x \in \mathbb{R}^{n}$, and $x$ is the
	weakly Pareto optimal for (\ref{1}) if and only if $u_0(x) = 0$.
\end{lemma}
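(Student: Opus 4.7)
The plan is to prove both parts by directly unpacking the definitions of $u_0$ and of weak Pareto optimality, with no fancy machinery required. First I would establish nonnegativity by the standard trick of plugging $z = x$ into the supremum defining $u_0(x)$: since $\min_i [F_i(x) - F_i(x)] = 0$, one immediately gets $u_0(x) \ge 0$ for every $x \in \mathbb{R}^n$.

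Next I would tackle the equivalence by rephrasing both sides as statements about the same quantifier pattern. Weak Pareto optimality of $x$ is the assertion that there is no $z \in \mathbb{R}^n$ with $F_i(z) < F_i(x)$ for all $i = 1, \dots, m$, which is logically equivalent to saying that for every $z \in \mathbb{R}^n$ there exists an index $i$ such that $F_i(x) - F_i(z) \le 0$, i.e., $\min_i [F_i(x) - F_i(z)] \le 0$. Taking the supremum over $z$, this is precisely $u_0(x) \le 0$. Combined with Step~1, that gives $u_0(x) = 0$.

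For the converse, if $u_0(x) = 0$ then $\min_i [F_i(x) - F_i(z)] \le 0$ for every $z$, so for each $z$ some coordinate satisfies $F_i(z) \ge F_i(x)$, ruling out the existence of a $z$ with $F(z) < F(x)$ componentwise. I would then conclude that $x$ is weakly Pareto optimal.

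There is essentially no obstacle: the argument is a one-line unpacking of definitions, and the only subtlety is to be careful that the strict inequality in weak Pareto optimality matches the non-strict inequality obtained from $u_0(x) \le 0$ via De~Morgan on the quantifier $\forall i$. Once that logical translation is recorded, the proof is complete in a few lines.
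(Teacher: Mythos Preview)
Your proposal is correct and is exactly the standard direct argument one would expect. Note, however, that the paper does not actually give its own proof of this lemma: it merely states the result and cites \cite{tanabe2023accelerated}. So there is nothing to compare against in this paper; your unpacking of the definitions is the natural proof and matches what appears in the cited reference.
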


\section{The Smoothing Accelerated Proximal Gradient Method with Extrapolation term for Non-smooth Multi-objective Optimization}

This section introduces an accelerated variant of the proximal gradient method tailored for multiobjective optimization.
 Choosing the smoothing function \(\tilde{c}\) as defined in Definition (\ref{def1}), we formulate an accelerated proximal gradient algorithm to solve the multiobjective optimization problem denoted as (\ref{1}). The algorithm achieves a faster convergence rate while also gain the sequential convergence.

Subsequently, we present the methodology employed to address the optimization problem denoted as (\ref{1}). Similar to the exposition in \cite{tanabe2023accelerated}, a subproblem is delineated and resolved in each iteration. Using the descent lemma, the proposed approach tackles the ensuing subproblem for prescribed values of \(x \in \text{dom}(F)\), \(y \in \mathbb{R}^{n}\), and \(\ell \geq L\):
\begin{equation}\label{4}
	\min_{z \in \mathbb{R}^{n}} \varphi_{\ell}(z;x,y,\mu),
\end{equation}
where
\begin{equation}\label{5}
	\varphi_l(z;x,y,\mu) := \max_{i=1,\dotsb,m} \left[\left\langle \nabla \tilde{f}_i (y,\mu),z-y\right\rangle +g_i(z)+\tilde{f}_i(y,\mu)-\tilde{F}_i(x,\mu) \right]  + \frac{\ell }{2} \left\|z-y\right\|^2. \\
\end{equation}

Since $g_{i}$ is convex for all $i = 1,\dotsb,m,z \mapsto \varphi_{\ell}(z;x,y,\mu)$ is strongly convex.Thus,the subproblem (\ref{4}) has a unique optimal solution $p_{\ell}(x,y,\mu)$ and attain the optimal function value $\theta_{\ell}(x,y,\mu)$,i.e.,
\begin{equation}\label{6}
	p_{\ell}(x,y,\mu) := \arg \min_{z\in \mathbb{R}^{n}} \varphi_{\ell}(z,x,y,\mu) \  \text{and} \  \theta_{\ell}(x,y,\mu) := \min_{z\in \mathbb{R}^{n}} \varphi_{\ell}(z,x,y,\mu).
\end{equation}

Furthermore, the optimality condition associated with the optimization problem denoted as (\ref{4}) implies that, for all \(x \in \text{dom} \, F\) and \(y \in \mathbb{R}^{n}\), there exists \(\eta(x, y, \mu) \in \partial g(p_{\ell}(x, y, \mu))\) and a Lagrange multiplier \(\lambda(x, y) \in \mathbb{R}^{m}\) such that
\begin{equation}\label{7}
	\sum_{i=1}^{m} \lambda_{i}(x,y) [ \nabla \tilde{f}_{i}(y,\mu) + \eta_{i}(x,y,\mu)] = -\ell [p_{\ell}(x,y) - y],
\end{equation}
\begin{equation}\label{8}
	\lambda(x,y) \in \Delta^{m}, \quad \lambda_{j}(x,y) = 0 \quad \forall j \notin \mathcal{I}(x,y),
\end{equation}
where $\Delta^{m}$ denotes the standard simplex and
\begin{equation}
	\mathcal{I}(x,y) := \arg\max_{i = 1,\dotsb,m}[ \left\langle \nabla \tilde{f}_{i}(y,\mu) , p_{\ell}(x,y,\mu) - y \right\rangle + g_{i}(p_{\ell}(x,y,\mu)) +\tilde{f}_{i}(y,\mu) - \tilde{F}_{i}(x,\mu) ].
\end{equation}
Before we present the algorithm framework, we first give the following assumption.

\begin{assumption}\label{a1}
	Suppose $X^{*}$ is set of the weakly Pareto optimal points and $\mathcal{L}_{F}(c) := \{x \in \mathbb{R}^{n}|F(x) \leq c\}$, then for any $x \in \mathcal{L}_{F}(C_0 \tilde{F}(x_0, \mu_0)), k\geq 0$,where $C_0 = [\frac{\eta \ell_{\tilde{f}_i}}{\ell_0}\tilde{F}_i(x_0, \mu_0) + \eta\ell_{\tilde{f}_i}[\kappa\mu_0+((\eta L_{\tilde{f}_i})^{-1} - \ell_0^{-1}) \tilde{F}_i^*]]/\tilde{F}(x_0, \mu_0)$, then there exists
	$x \in X^{*}$such that $\tilde{F}(x^{*}, \mu_k) \leq \tilde{F}(x, \mu_k)$ and
	$$
	C_1 = \begin{cases}
		\frac{4 \mu_0^2}{(2-\sigma)(2-2\sigma)}(k+1)^{2-2\sigma}, &if \ 0 < \sigma <1, \\
		\frac{4\mu_0^2}{2-\sigma} (\ln k + 1), &if \ \sigma = 1, \\
		\frac{4\mu_0^2 (2\sigma - 1)}{(2 - \sigma) ( 2\sigma - 2)}, &if \ 1 < \sigma < 2.
	\end{cases}
	$$
	$$
	\begin{aligned}
		R := \sup_{\tilde{F}^{*} \in \tilde{F}(X^{*} \cap \mathcal{L}_{\tilde{F}}(C_0 \tilde{F}(x_0, \mu_0))} &\inf_{z \in \tilde{F}^{-1}(\{\tilde{F}^{*}\})}
		 \parallel x_0 - z \parallel^2 + 4\kappa L^{-1} \mu_0^2 + C_1 < \infty.
	\end{aligned}
	$$
\end{assumption}

For easy of reference and corresponding to its structure, we call the proposed
algorithm the smoothing accelerated proximal gradient method with
extrapolation term for nonsmooth multiobjective
optimization(SAPGM) in this paper.The algorithm is in the following form.
\begin{algorithm}[H]
	\renewcommand{\algorithmicrequire}{\textbf{Input:}}
	\renewcommand{\algorithmicensure}{\textbf{Output:}}
	\caption{The Smoothing Accelerated Proximal Gradient Method with
		Extrapolation term for Non-smooth Multi-objective
		Optimization}
	\label{alg1}
	\begin{algorithmic}[1]
		\REQUIRE Take initial point $y_{0} =x_0  \in \text{dom}F$,  Choose parameters $\varepsilon >0$, $\mu_0 \in (0,1]$, $L_0 \in [1, \infty)$, $\eta > 1$, $\sigma \in (0,2)$. Set $k=0$.
		\LOOP
		\STATE Set $L_k = L_0, \ell = L_k \mu_{k}^{-1}$.
		
		(2a) Compute $\hat{x}_{k+1} = p_{\ell}(x_k,y_{k},\mu_{k+1}).$
		\IF{$$
			\begin{aligned}
				2\min_{i}(\tilde{f}_{i}(\hat{x}_{k+1},\mu_{k+1})-\tilde{f}_{i}(y_{k},\mu_{k+1})u-\langle\nabla\tilde{f}_{i}(y_{k},\mu_{k+1}),\hat{x}_{k+1}-y_{k}\rangle ) 
				>\frac{1}{2}L_k \mu_{k}^{-1}\parallel \hat{x}_{k+1}-y_{k}\parallel^{2}
			\end{aligned}
			$$}
		\STATE $$L_k \leftarrow \eta L_k \ and \ go \ to \ step \ (2a).$$
		\ELSE
		\STATE $$x_{k+1} = \hat{x}_{k+1},$$
		\ENDIF
		
		\IF{$\left\|x_k - x_{k+1} \right\| < \varepsilon$ and $\mu_{k+1} < \varepsilon$}
		\RETURN $x_{k+1}$
		\ENDIF
		\STATE Compute
		$$
		\mu_{k+1} = \frac{\mu_0}{(k+1)^{\sigma}},
		$$
		$$
		t_{k+1} = \frac{1+\sqrt{1+4\left(\frac{\mu_k L_{k+1}}{\mu_{k+1}L_k}\right)t_k^2}}{2},
		$$
		$$
		\theta_{k+1} =\frac{t_k-1}{t_{k+1}},
		$$
		$$
		y_{k+1} = x_{k+1} + \theta_{k+1}(x_{k+1} - x_k).
		$$
		\STATE $k \leftarrow k+1.$
		\ENDLOOP
		\ENSURE $x^*$: A weakly Pareto optimal point
	\end{algorithmic}
\end{algorithm}

\section{The convergence rate analysis of SAPGM}
\subsection{Some Basic Estimation}
This section shows that SAPGM has different convergence rates with different $\sigma$  under the  Assumption (\ref{a1}). For the convenience of the complexity analysis, we use some functions defined in \cite{tanabe2023accelerated}. For $k \geq 0$,for all $z \in \mathbb{R^n}$, let $W_{k} : \mathbb{R}^{n} \to \mathbb{R} \cup \{ - \infty\}$  be defined by
\begin{equation}\label{wkuk}
	\begin{aligned}
		W_{k}(z) &:= \min_{i = 1,\dotsb,m}[\tilde{F}_{i}(x_{k},\mu_{k}) - F_{i}(z)]+ \kappa \mu_{k}.
	\end{aligned}
\end{equation}

\begin{lemma}{\cite{wu2023smoothing}}
	$\{\gamma_k\}$ and $\{\mu_k\}$is non-increasing, and $\{\gamma\}$ has lower bound by $\underline{\gamma} := \min \{\gamma_0, \eta L\}$
\end{lemma}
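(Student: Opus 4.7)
The plan is to handle the two sequences separately. The claim for $\{\mu_k\}$ is essentially immediate: by construction $\mu_{k+1}=\mu_0/(k+1)^{\sigma}$ with $\sigma\in(0,2)$, so $(k+1)^{\sigma}$ is strictly increasing in $k$ and hence $\mu_{k+1}\le\mu_k$ for every $k\ge 0$. This also yields the convenient bound $\mu_k\le\mu_0\le 1$, which will be useful when checking the backtracking test.

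For $\{\gamma_k\}$ I would argue by induction on $k$, where $\gamma_k$ denotes the Lipschitz estimate accepted at the end of outer iteration $k$ (so that $\gamma_k$ plays the role of $L_k$ after the backtracking loop has exited). Within a single outer iteration the candidate value can only move by multiplication by $\eta>1$, triggered when the descent test in the \textbf{if} statement fails. The key step is therefore to show (i) that the candidate at the start of iteration $k+1$ is no larger than $\gamma_k$, and (ii) that this candidate already satisfies the backtracking test, so that no further inflation takes place. Part (i) is a matter of tracking how the algorithm initializes $L_{k+1}$ from $L_k$; part (ii) follows by comparing the exact Lipschitz bound with the test threshold. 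Specifically, property (vi) of Definition \ref{def1} gives
\[
\tilde f_i(\hat x_{k+1},\mu_{k+1})-\tilde f_i(y_k,\mu_{k+1})-\langle\nabla\tilde f_i(y_k,\mu_{k+1}),\hat x_{k+1}-y_k\rangle\le \tfrac{L}{2\mu_{k+1}}\|\hat x_{k+1}-y_k\|^2,
\]
and combining this with $\mu_{k+1}\le\mu_k$ from the first part lets one conclude that the test $\tfrac12 L_k\mu_k^{-1}\|\hat x_{k+1}-y_k\|^2$ dominates the left-hand side as soon as $L_k$ is of order $L$, so backtracking stops.

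The lower bound $\underline\gamma=\min\{\gamma_0,\eta L\}$ now falls out of the same analysis: the backtracking loop terminates at the first power of $\eta$ that pushes the estimate past $L$ (in the appropriate scaling), so an accepted $\gamma_k$ lies in $[L,\eta L]$, while if the initial $\gamma_0$ was already above this threshold no inflation occurred and $\gamma_k\le\gamma_0$ throughout; taking the minimum covers both regimes. The main obstacle, in my view, will be step (i) of the induction: one has to reconcile the apparent \textbf{Set $L_k=L_0$} reset at the top of the loop with the claim of monotone decrease across iterations, which likely requires reading that line as a warm start from the previous accepted value together with the ratio $\mu_k L_{k+1}/(\mu_{k+1}L_k)$ built into the $t_{k+1}$ recursion. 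Once that bookkeeping is fixed, the Lipschitz estimate in property (vi) turns both the descent check and the lower bound into routine verifications.
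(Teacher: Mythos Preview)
The paper's own ``proof'' here is a single sentence: it asserts that the update rules make $\gamma_k$ and $\mu_k$ non-increasing and defers the lower-bound claim to Lemma~3.1 of the cited reference. There is essentially no argument in the paper to compare your proposal against; everything beyond the $\mu_k$ claim is outsourced.

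Your treatment of $\mu_k$ is fine and matches what the paper implicitly relies on. The substantive issue is with $\gamma_k$. First, note that $\gamma_k$ is never actually defined in this paper; the symbol is imported from the cited work, and you have identified it with the accepted Lipschitz estimate $L_k$. Under that reading there is a direction tension you have not resolved: backtracking only \emph{increases} the running estimate (multiplication by $\eta>1$), so if $\gamma_k$ really were that estimate, ``non-increasing'' would be false unless the reset to $L_0$ at the top of each outer iteration is doing all the work --- in which case monotonicity across iterations is not at all obvious, as you yourself flag. You identify this as ``the main obstacle'' and then write ``once that bookkeeping is fixed, the Lipschitz estimate \dots turns both the descent check and the lower bound into routine verifications.'' That sentence is precisely where the proof would need to happen, and it is missing. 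As written, your proposal names the gap but does not close it.

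A second, smaller issue: your lower-bound argument says an accepted $\gamma_k$ ``lies in $[L,\eta L]$'' and then takes $\min\{\gamma_0,\eta L\}$ to cover the case where $\gamma_0$ was already large. But if $\gamma_k\in[L,\eta L]$ whenever backtracking is triggered, the natural lower bound is $L$, not $\eta L$; and if $\gamma_0$ is large and no inflation occurs, $\gamma_k\le\gamma_0$ gives an \emph{upper} bound, not a lower one. The two cases you describe do not combine into the stated $\underline\gamma=\min\{\gamma_0,\eta L\}$ without further clarification of what $\gamma_k$ denotes and which direction the inequalities run. Before attempting a self-contained proof, you should pin down from the cited source whether $\gamma_k$ is a step-size parameter (e.g.\ $\mu_k/L_k$) rather than a Lipschitz estimate; that would reconcile ``non-increasing'' with the backtracking mechanics and make the lower bound $\min\{\gamma_0,\eta L\}$ dimensionally sensible.
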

\begin{proof}
	The iterative formats of $\gamma_k$ and $\mu_k$ shows that they are non-increasing, the rest  is similar to Lemma 3.1 of \cite{wu2023smoothing}.
\end{proof}

\begin{lemma}\label{tk}
 For the sequences $\{t_k\}$ and $\{\theta_k\}$ generated by FISTAS, the following properties hold:
 
 (i) $\frac{1}{2} + \sqrt{\frac{L_{k+1}\mu_k}{L_k\mu_{k+1}}}t_k \leq t_{k+1} \leq \left( 1 + \sqrt{\frac{L_{k+1}\mu_k}{L_k\mu_{k+1}}} \right) t_k$;
 
 (ii) $L_{k+1}^{-1}\mu_{k+1} t_{k+1} (t_{k+1} - 1) = L_k^{-1}\mu_k t_k^2$;
 
 (iii) $\frac{2\sqrt{\mu_0/L_0}}{(2 -\sigma)\eta^{k+1}} \left((k+1)^{1-\sigma/2}-1\right) \leq  t_{k+1}\sqrt{\frac{\mu_{k+1}}{L_{k+1}}}\leq \frac{4 \sqrt{\mu_0 / L_0}}{2 - \sigma} (k+2)^{1 - \frac{\sigma}{2}}$;
 
 (iv) $L_{k+1}^{-1}t_{k+1} \mu_{k+1}^2 \leq \frac{4 \mu_0^2}{L_0(2-\sigma)}  (k+1)^{1 - 2\sigma}$;
 
 (v) For any $k \in \mathbb{N}$, $\theta_{k+1}^2 \leq \frac{L_k\mu_{k+1}}{L_{k+1}\mu_k} < 1$;
 

\end{lemma}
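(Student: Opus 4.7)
The plan is to prove (i)--(v) in order, with each part either purely algebraic or propagating the previous estimate; I expect (iii) to be the main technical hurdle.

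For (i) and (ii) I work directly from the recursion
$$t_{k+1} = \frac{1+\sqrt{1+4\alpha_k t_k^2}}{2}, \qquad \alpha_k := \frac{L_{k+1}\mu_k}{L_k\mu_{k+1}}.$$
Dropping the $1$ inside the square root gives the lower bound $t_{k+1}\geq \tfrac12+\sqrt{\alpha_k}\,t_k$ in (i). For the upper bound I use the elementary estimate $\sqrt{1+4\alpha_k t_k^2}\leq 1+2\sqrt{\alpha_k}\,t_k$ (obtained by comparing squares), yielding $t_{k+1}\leq 1+\sqrt{\alpha_k}\,t_k \leq (1+\sqrt{\alpha_k})t_k$, where the last step needs $t_k\geq 1$, proved by a one-line induction from $t_0=1$ using the lower bound already in hand. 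For (ii), I isolate the square root in the defining recursion and square: $(2t_{k+1}-1)^2 = 1+4\alpha_k t_k^2$ collapses to $t_{k+1}(t_{k+1}-1) = \alpha_k t_k^2$, and multiplying through by $\mu_{k+1}L_{k+1}^{-1}$ produces the stated identity.

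Part (iii) is the principal piece. Setting $a_k := t_k\sqrt{\mu_k/L_k}$ and multiplying the two halves of (i) by $\sqrt{\mu_{k+1}/L_{k+1}}$ turns them into
$$\tfrac{1}{2}\sqrt{\mu_{k+1}/L_{k+1}} + a_k \;\leq\; a_{k+1} \;\leq\; \sqrt{\mu_{k+1}/L_{k+1}} + a_k,$$
so telescoping from $a_0 = \sqrt{\mu_0/L_0}$ and substituting $\mu_j = \mu_0 j^{-\sigma}$ together with the worst-case backtracking bound $L_j \leq L_0\eta^{j}$ (on the lower side) and $L_j\geq L_0$ (on the upper side) reduces the estimates to partial sums of $\sum j^{-\sigma/2}$. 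I then sandwich these sums between the integrals $\int_1^{k+1}s^{-\sigma/2}\,ds$ and $\int_0^{k+1}s^{-\sigma/2}\,ds$, both of the form $\tfrac{2}{2-\sigma}((k+1)^{1-\sigma/2}-\text{const})$; absorbing $(k+2)^{1-\sigma/2}\leq 2(k+1)^{1-\sigma/2}$ recovers the displayed constants $\tfrac{2}{2-\sigma}$ and $\tfrac{4}{2-\sigma}$. The bookkeeping between the exponential growth of $L_j$ under backtracking and the polynomial decay of $\mu_j$ is the main obstacle; the factor $\eta^{k+1}$ in the lower bound traces precisely to the pessimistic $L_j\leq L_0\eta^{j}$ estimate.

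Parts (iv) and (v) are short. For (iv), I multiply the upper bound in (iii) by $\sqrt{\mu_{k+1}/L_{k+1}}$ to obtain an estimate for $t_{k+1}\mu_{k+1}L_{k+1}^{-1}$, then multiply by $\mu_{k+1}$ and substitute $\mu_{k+1}=\mu_0(k+1)^{-\sigma}$ together with $L_{k+1}\geq L_0$; the exponent on $(k+1)$ collects as $(1-\sigma/2)-3\sigma/2 = 1-2\sigma$, matching the claim once $(k+2)^{1-\sigma/2}\leq 2(k+1)^{1-\sigma/2}$ is used. For (v), I rewrite the target inequality as $\alpha_k(t_k-1)^2 \leq t_{k+1}^2$ and expand: $\alpha_k(t_k-1)^2 = \alpha_k t_k^2 - 2\alpha_k t_k + \alpha_k$. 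Identity (ii) turns the first term into $t_{k+1}^2 - t_{k+1}$, so the inequality reduces to $\alpha_k \leq t_{k+1}+2\alpha_k t_k$, which is immediate from $t_k\geq 1$. The strict upper bound $L_k\mu_{k+1}/(L_{k+1}\mu_k)<1$ is then read off from the monotonicity of $\{\gamma_k\}$ and $\{\mu_k\}$ established in the preceding lemma, closing the argument.
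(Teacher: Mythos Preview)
Your proof is correct and follows essentially the same route as the paper: (i)--(ii) are algebraic from the recursion, (iii) is obtained by telescoping the rescaled quantities $a_k=t_k\sqrt{\mu_k/L_k}$ and bounding the partial sums $\sum j^{-\sigma/2}$ by integrals, and (iv) is read off from (iii) by the same factorisation $L_{k+1}^{-1}t_{k+1}\mu_{k+1}^2 = \bigl(t_{k+1}\sqrt{\mu_{k+1}/L_{k+1}}\bigr)\sqrt{\mu_{k+1}^3/L_{k+1}}$. The only visible difference is in (v): the paper observes directly from the lower bound in (i) that $t_{k+1}\geq \sqrt{\alpha_k}\,t_k$, giving $\theta_{k+1}^2 \leq t_k^2/t_{k+1}^2 \leq \alpha_k^{-1}$ in one line, whereas you take a slightly longer detour through identity (ii); your argument is correct but more elaborate than needed.
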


\begin{proof}
	(i) and (ii) can be computed by the definition of $\{t_k\}$, now we turn to proof (iii)-(v).
	
	(iii) From (i), we get
	$$
	\sqrt{\frac{\mu_{k+1}}{L_{k+1}}} t_{k+1} \leq \sqrt{\frac{\mu_{k+1}}{L_{k+1}}} + \sqrt{\frac{\mu_{k}}{L_{k}}} t_k,
	$$
	then, because $t_0 = 1$, for $k \geq 0$, we have
	\begin{equation}\label{tkmuk}
		\sqrt{\frac{\mu_{k+1}}{L_{k+1}}} t_{k+1} \leq \sum_{i=1}^{k+1}\sqrt{\frac{\mu_{i}}{L_{i}}} + \sqrt{\frac{\mu_{0}}{L_{0}}}.
	\end{equation}
	
	Use the definition of $\mu_k$ and $L_k$, we infer that
	$$
	\sum_{i=1}^{k+1}\sqrt{\frac{\mu_{i}}{L_{i}}} = \sqrt{\frac{\mu_0}{L_0}} \sum_{i=1}^{k+1}\eta^{-i/2} i ^{-\sigma/2} \leq \sqrt{\frac{\mu_0}{L_0}}\sum_{i=1}^{k+1}i ^{-\sigma/2} \leq \sqrt{\frac{\mu_0}{L_0}} \int_{0}^{k+1} t^{-\sigma/2} dt = \frac{2\sqrt{\mu_0/L_0}}{2 -\sigma} (k+1)^{1-\sigma/2}.
	$$
	Take it into (\ref{tkmuk}), then,
	$$
	t_{k+1}\sqrt{\frac{\mu_{k+1}}{L_{k+1}}}\leq \frac{4 \sqrt{\mu_0 / L_0}}{2 - \sigma} (k+2)^{1 - \frac{\sigma}{2}}.
	$$
	Similarly, we have
	$$
	\begin{aligned}
		t_{k+1} \sqrt{\frac{\mu_{k+1}}{L_{k+1}}} \geq \frac{1}{2}\sum_{i=1}^{k+1}\sqrt{\frac{\mu_{i}}{L_{i}}} + \sqrt{\frac{\mu_{0}}{L_{0}}} \geq  \frac{1}{2\eta^{k+1}}\sqrt{\frac{\mu_0}{L_0}} \int_{0}^{k+1} t^{-\sigma/2} dt+ \sqrt{\frac{\mu_{0}}{L_{0}}} \geq \frac{2\sqrt{\mu_0/L_0}}{(2 -\sigma)\eta^{k+1}} \left((k+1)^{1-\sigma/2}-1\right)
	\end{aligned}
	$$
	
	(iv) From (iii), we have
	$$
	L_{k+1}^{-1}t_{k+1}\mu_{k+1}^2 = t_{k+1} \sqrt{\frac{\mu_{k+1}}{L_{k+1}}} \sqrt{\frac{\mu_{k+1}^3}{L_{k+1}}} \leq \frac{4 \mu_0^2}{L_0(2-\sigma)}  (k+1)^{1 - 2\sigma}.
	$$
	
	(v) From the first inequality of (i), we get $t_{k+1}^2 \geq \frac{L_{k+1}\mu_k}{L_k \mu_{k+1}}t_k^2$, so
	$$
	\theta_{k+1}^2 \leq \frac{t_k^2}{t_{k+1}^2} \leq \frac{L_k\mu_{k+1}}{L_{k+1}\mu_k} < 1.
	$$
\end{proof}
\begin{proposition}\label{prop5.2}
	
	Let $W_k(z)$ define as (\ref{wkuk}), we have
	\begin{equation}\label{15}
		W_{k+1}(z)\leq W_{k}(z) -\frac{L_{k+1}\mu_{k+1}^{-1}}{2} [2\langle x_{k+1} - y_k, y_k -x_k\rangle  - \parallel x_{k+1} - y_k \parallel^2],
	\end{equation}
	and
	\begin{equation}\label{16}
		\begin{aligned}
			W_{k+1}(z) \leq& \frac{L_{k+1}\mu_{k+1}^{-1}}{2}[2\langle y_k-x_{k+1},y_k-z\rangle -\|x_{k+1}-y_k\|^2]+2\kappa\mu_{k+1}.
		\end{aligned}
	\end{equation}
\end{proposition}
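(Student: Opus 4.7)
}

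Both inequalities are FISTA-style ``step lemmas'' adapted to the multiobjective smoothing setting. The common toolkit is: (a) the descent lemma from property (vi) of Definition \ref{def1}, namely $\tilde{f}_i(x_{k+1},\mu_{k+1}) \le \tilde{f}_i(y_k,\mu_{k+1}) + \langle\nabla\tilde{f}_i(y_k,\mu_{k+1}),x_{k+1}-y_k\rangle + \frac{\ell}{2}\|x_{k+1}-y_k\|^2$ with $\ell = L_{k+1}\mu_{k+1}^{-1}$; (b) the convex subgradient inequalities for $\tilde{f}_i(\cdot,\mu_{k+1})$ and $g_i$; (c) the optimality of $x_{k+1}$ as the unique minimizer of the $\ell$-strongly convex subproblem $\varphi_\ell(\,\cdot\,;x_k,y_k,\mu_{k+1})$, encoded via the KKT system (\ref{7})--(\ref{8}); and (d) the smoothing estimate $|\tilde{f}_i(x,\mu_1)-\tilde{f}_i(x,\mu_2)| \le \kappa|\mu_1-\mu_2|$ (combining (ii) and (v)).

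For (\ref{16}), I would mimic the classical proximal-gradient step lemma. For each $i$, combine the descent lemma with convexity of $\tilde{f}_i(\cdot,\mu_{k+1})$ at $y_k$ evaluated at $z$ and the subgradient inequality for $g_i$ at $x_{k+1}$ to obtain
\[
\tilde{F}_i(x_{k+1},\mu_{k+1}) - \tilde{F}_i(z,\mu_{k+1}) \le \langle\nabla\tilde{f}_i(y_k,\mu_{k+1})+\eta_i,\,x_{k+1}-z\rangle + \tfrac{\ell}{2}\|x_{k+1}-y_k\|^2
\]
for suitable $\eta_i \in \partial g_i(x_{k+1})$. Taking a convex combination against the Lagrange multipliers $\lambda_i(x_k,y_k)$ from (\ref{7}) converts the inner-product term into $-\ell\langle x_{k+1}-y_k,\,x_{k+1}-z\rangle$. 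Then use $\min_i \le \sum_i \lambda_i$ together with $F_i(z) \ge \tilde{F}_i(z,\mu_{k+1}) - \kappa\mu_{k+1}$ (from (v)) to pass to $W_{k+1}(z) \le \sum_i \lambda_i[\tilde{F}_i(x_{k+1},\mu_{k+1}) - \tilde{F}_i(z,\mu_{k+1})] + 2\kappa\mu_{k+1}$. The final step is the algebraic identity $-\langle x_{k+1}-y_k,x_{k+1}-z\rangle = \langle y_k-x_{k+1},y_k-z\rangle - \|x_{k+1}-y_k\|^2$, which puts the bound into exactly the form on the right of (\ref{16}).

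For (\ref{15}) the strategy is different because the right-hand side is independent of $z$ and we must compare two $\min$-over-$i$ expressions. I would pick $j^\ast \in \arg\min_i[\tilde{F}_i(x_k,\mu_k)-F_i(z)]$, so that $W_k(z) = \tilde{F}_{j^\ast}(x_k,\mu_k)-F_{j^\ast}(z)+\kappa\mu_k$ while by definition $W_{k+1}(z) \le \tilde{F}_{j^\ast}(x_{k+1},\mu_{k+1}) - F_{j^\ast}(z) + \kappa\mu_{k+1}$. Subtracting and using property (v) to absorb the difference between $\tilde{f}_{j^\ast}(x_k,\mu_{k+1})$ and $\tilde{f}_{j^\ast}(x_k,\mu_k)$ (the $\kappa(\mu_k-\mu_{k+1})$ and $\kappa(\mu_{k+1}-\mu_k)$ contributions cancel), the job reduces to bounding $\tilde{F}_{j^\ast}(x_{k+1},\mu_{k+1})-\tilde{F}_{j^\ast}(x_k,\mu_{k+1})$. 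Via descent and convexity, this is dominated (for every $i$, hence for $j^\ast$) by $\varphi_\ell(x_{k+1};x_k,y_k,\mu_{k+1})$. Now exploit that $x_{k+1}$ minimizes the $\ell$-strongly convex $\varphi_\ell$: $\varphi_\ell(x_{k+1};\cdot) + \tfrac{\ell}{2}\|x_k-x_{k+1}\|^2 \le \varphi_\ell(x_k;\cdot)$, and check by convexity of each $\tilde{f}_i$ that every term in the max defining $\varphi_\ell(x_k;x_k,y_k,\mu_{k+1})$ is $\le 0$, giving $\varphi_\ell(x_k;\cdot) \le \frac{\ell}{2}\|x_k-y_k\|^2$. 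Chaining yields $\tilde{F}_{j^\ast}(x_{k+1},\mu_{k+1}) - \tilde{F}_{j^\ast}(x_k,\mu_{k+1}) \le \frac{\ell}{2}[\|x_k-y_k\|^2 - \|x_k-x_{k+1}\|^2]$; the quadratic-form identity $-[2\langle x_{k+1}-y_k,y_k-x_k\rangle - \|x_{k+1}-y_k\|^2] = \|y_k-x_k\|^2 - \|x_{k+1}-x_k\|^2 + 2\|x_{k+1}-y_k\|^2$ shows this bound is (up to the nonnegative slack $\ell\|x_{k+1}-y_k\|^2$) exactly the right-hand side of (\ref{15}).

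The main obstacle I anticipate is reconciling the $\min$-structure of $W_k(z)$ with the KKT multipliers $\lambda_i$, which live on the active set $\mathcal{I}(x_k,y_k)$ and generally do not align with the minimizing index $j^\ast$. The way around it is to use the Lagrangian averaging only in the proof of (\ref{16}), where convex combinations are compatible with $\min_i \le \sum_i \lambda_i$, and to replace it in (\ref{15}) by the $\ell$-strong-convexity/optimality chain above, which yields a per-$i$ bound valid uniformly in $j^\ast$. A secondary care point is the sign-tracking in the quadratic-form identities when converting between the compact form $2\langle x_{k+1}-y_k,y_k-x_k\rangle - \|x_{k+1}-y_k\|^2$ of (\ref{15}) and the expanded form $\|y_k-x_k\|^2 - \|x_k-x_{k+1}\|^2$ that naturally drops out of the step lemma.
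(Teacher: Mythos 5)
Your proposal is correct, and it splits into two cases. For inequality (\ref{16}) your argument is essentially identical to the paper's: bound the $\min$ by the $\lambda$-weighted sum, insert $\tilde{F}_i(z,\mu_{k+1})$ to collect the $2\kappa\mu_{k+1}$ term, apply the descent lemma and the subgradient inequalities, and then use the stationarity condition (\ref{7}) to collapse $\sum_i\lambda_i[\nabla\tilde f_i+\eta_i]$ into $-\ell(x_{k+1}-y_k)$; the final polarization identity matches. For inequality (\ref{15}), however, you take a genuinely different route. The paper reduces $W_k(z)-W_{k+1}(z)$ to $-\max_i[\cdots]$, converts that max into $-\sum_i\lambda_i[\cdots]$ by invoking the support condition (\ref{8}) (the multipliers live on the argmax set $\mathcal{I}(x_k,y_k)$), splits $x_{k+1}-y_k=(x_k-y_k)+(x_{k+1}-x_k)$, and finishes again with (\ref{7}). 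You instead fix the minimizing index $j^{\ast}$ of $W_k$, cancel the $\kappa(\mu_k-\mu_{k+1})$ terms via property (v), dominate $\tilde F_{j^\ast}(x_{k+1},\mu_{k+1})-\tilde F_{j^\ast}(x_k,\mu_{k+1})$ by $\varphi_\ell(x_{k+1};x_k,y_k,\mu_{k+1})$, and close with the three-point inequality $\varphi_\ell(x_{k+1};\cdot)\le\varphi_\ell(x_k;\cdot)-\frac{\ell}{2}\|x_k-x_{k+1}\|^2$ from $\ell$-strong convexity together with $\varphi_\ell(x_k;\cdot)\le\frac{\ell}{2}\|x_k-y_k\|^2$. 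Both chains in fact land on the same (slightly stronger) bound $W_k-W_{k+1}\ge\frac{\ell}{2}[2\langle x_{k+1}-y_k,y_k-x_k\rangle+\|x_{k+1}-y_k\|^2]$ before relaxing to (\ref{15}). What your route buys is exactly what you anticipated: it never needs the minimizing index to coincide with the active set, so it avoids the delicate step where the paper identifies $\max_i[\cdots]$ with $\sum_i\lambda_i[\cdots]$ --- an identification that, as written in the paper, involves $\tilde F_i(x_k,\mu_k)$ while $\mathcal{I}(x_k,y_k)$ is defined through the subproblem with parameter $\mu_{k+1}$, a mismatch your per-index argument simply does not encounter. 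The cost is that your proof of (\ref{15}) uses the optimality of $x_{k+1}$ only through strong convexity rather than through the explicit KKT system, so it is less parallel in structure to the proof of (\ref{16}); but it is, if anything, the cleaner of the two.
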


\begin{proof}
	Recall that there exists $\eta(x_k, y_k, \mu_{k+1}) \in \partial g(x_{k+1})$ and Lagrange multiplier $\lambda_i(x_k, y_k) \in R^m$ that satisfy the KKT condition (\ref{7},\ref{8}) for the subproblem (\ref{4}), we have
	$$
	\begin{aligned}
		W_{k+1}(z)   &= \min_{i = 1,\dotsb,m} [\tilde{F}_i(x_{k+1}, \mu_{k+1}) -  F_i(z)] + \kappa \mu_{k+1} \\
		&\leq \sum_{i=1}^{m} \lambda_i(x_k, y_k) [\tilde{F}_i(x_{k+1}, \mu_{k+1}) -  F_i(z) + \kappa \mu_{k+1}].
	\end{aligned}
	$$
	Basic from the descent lemma,
	$$
	\begin{aligned}
		W_{k+1}(z)  &\leq \sum_{i=1}^{m} \lambda_i(x_{k}, y_k) [\tilde{F}_i(x_{k+1}, \mu_{k+1}) -  F_i(z) + \kappa \mu_{k+1}] \\
		&= \sum_{i=1}^{m} \lambda_i(x_{k}, y_k) [\tilde{F}_i(x_{k+1}, \mu_{k+1}) - \tilde{F}_i(z, \mu_{k+1}) + \tilde{F}_i(z, \mu_{k+1}) -  F_i(z) + \kappa \mu_{k+1}] \\
		&\leq \sum_{i=1}^{m} \lambda_i(x_{k}, y_k) [\tilde{F}_i(x_{k+1}, \mu_{k+1}) - \tilde{F}_i(z, \mu_{k+1}) + 2\kappa \mu_{k+1}] \\
		&\leq \sum_{i=1}^{m} \lambda_i(x_{k}, y_k) [\langle \nabla \tilde{f}_i(y_k, \mu_{k+1}), x_{k+1} - y_k \rangle + g_i(x_{k+1}) \\
		&\quad + \tilde{f}_i(y_k, \mu_{k+1}) - \tilde{F}_i(z, \mu_{k+1}) + 2\kappa \mu_{k+1}] + \frac{\ell}{2} \parallel x_{k+1} - y_k \parallel^2.
	\end{aligned}
	$$
	Hence,the convexity of $f_i$ and $g_i$ yields
	$$
	\begin{aligned}
		W_{k+1}(z)  &\leq \sum_{i=1}^{m} \lambda_i(x_{k}, y_k) [\langle \nabla \tilde{f}_i(y_k, \mu_{k+1}) + \eta(x_k, y_k, \mu_{k+1}), x_{k+1} - z \rangle + 2\kappa \mu_{k+1}] \\
		& \quad+ \frac{\ell}{2} \parallel x_{k+1} - y_k \parallel^2.
	\end{aligned}
	$$
	Using (\ref{7}) with $x = x_k$ and $y = y_k$ and from the fact that $x_{k+1} = p_\ell(x_k, y_k)$, we obtain
	$$
	\begin{aligned}
		W_{k+1}(z)
		&\leq \frac{L_{k+1}\mu_{k+1}^{-1}}{2}[2\langle y_k - x_{k+1}, y_k -z \rangle - \parallel x_{k+1} - y_k \parallel^2]+ 2 \kappa \mu_{k+1} ,
	\end{aligned}
	$$
	
	From the definition of $W_k(z)$, we obtain
	$$
	\begin{aligned}
		W_k(z) - W_{k+1}(z)
		\geq - \max_{i =1, \dotsb,m}[\tilde{F}_i(x_{k+1}, \mu_{k+1}) - \tilde{F}_i(x_k, \mu_k)] + \kappa(\mu_k - \mu_{k+1})	
	\end{aligned}
	$$
	Basic from the descent lemma,
	$$
	\begin{aligned}
		&\quad W_k(z)  - W_{k+1}(z)  \\
		&\geq - \max_{i =1, \dotsb,m} [\langle \nabla \tilde{f}_i(y_k, \mu_{k+1}), x_{k+1} - y_k \rangle + g_i(x_{k+1}) + \tilde{f}_i(y_k, \mu_{k+1}) -\tilde{F}_i(x_k, \mu_k)]\\
		&\quad  - \frac{L_{k+1}\mu_{k+1}^{-1}}{2} \parallel x_{k+1} - y_k \parallel^2+ \kappa(\mu_k - \mu_{k+1})	\\
		&= -\sum_{i=1}^{m} \lambda_i(x_k, y_k) [\langle \nabla \tilde{f}_i(y_k, \mu_{k+1}), x_{k+1} - y_k \rangle + g_i(x_{k+1}) + \tilde{f}_i(y_k, \mu_{k+1}) -\tilde{F}_i(x_k, \mu_k)] \\
		&\quad  - \frac{L_{k+1}\mu_{k+1}^{-1}}{2} \parallel x_{k+1} - y_k \parallel^2+ \kappa(\mu_k - \mu_{k+1})	 \\
		&= -\sum_{i=1}^{m} \lambda_i(x_k, y_k) [\langle \nabla \tilde{f}_i(y_k, \mu_{k+1}), x_{k} - y_k \rangle + \tilde{f}_i(y_k, \mu_{k+1}) - \tilde{f}_i(x_k, \mu_k)] \\
		&\quad-\sum_{i=1}^{m} \lambda_i(x_k, y_k) [\langle \nabla \tilde{f}_i(y_k, \mu_{k+1}), x_{k+1} - x_k \rangle + g_i(x_{k+1}) - g_i(x_k)] \\
		&\quad- \frac{L_{k+1}\mu_{k+1}^{-1}}{2} \parallel x_{k+1} - y_k \parallel^2+ \kappa(\mu_k - \mu_{k+1})	.
	\end{aligned}
	$$
	where the  first equality comes from (\ref{8}),and the second one follows by taking $x_{k+1} - y_k = (x_k - y_k) + (x_{k+1} - x_k)$. From Definition \ref{def1} (v) and the convexity of $\tilde{f}_i, g_i$, we show that
	$$
	\begin{aligned}
		W_k(z)  - W_{k+1}(z)  \geq& -\sum_{i=1}^{m} \lambda_i(x_k, y_k) \langle \nabla \tilde{f}_i(y_k, \mu_{k+1}) + \eta(x_k, y_k, \mu_{k+1}), x_{k+1} - x_{k} \rangle \\
		& - \frac{L_{k+1}\mu_{k+1}^{-1}}{2} \parallel x_{k+1} - y_k \parallel^2 .	
	\end{aligned}
	$$
	Thus, (\ref{7}) and some calculations prove that
	$$
	\begin{aligned}
		W_k(z) - W_{k+1}(z)
		\geq \frac{L_{k+1}\mu_{k+1}^{-1}}{2} [2\langle x_{k+1} - y_k, y_k -x_k\rangle  - \parallel x_{k+1} - y_k \parallel^2]. \\
	\end{aligned}
	$$
	So we can get two inequalities of $W_{k}$ and $W_{k+1}$ that we want.
\end{proof}

\begin{remark}
	Similar to \cite{beck2009fast}, set the real Lipschitz constant of $\nabla \tilde{f}_i$ as $L_{\tilde{f}_i}$, then the relation between $\ell_k$ and $L_{\tilde{f}_i} $ is as follow:
	$$
	\beta L_{\tilde{f_i}} \leq \ell_k \leq \eta L_{\tilde{f_i}},
	$$
	where $\beta = \ell_0 / L_{\tilde{f}_i}$.
\end{remark}

\begin{theorem}\label{fx0}
	SAPGM generates a sequence $\{x_k\}$ such that for all $i =1,\dotsb m$ and $k\geq 0$, we have
	$$\tilde{F}_i(x_k, \mu_k) \leq \frac{\eta \ell_{\tilde{f}_i}}{\ell_0}\tilde{F}_i(x_0, \mu_0)+ \eta\ell_{\tilde{f}_i}[\kappa\mu_0+(\eta L_{\tilde{f}_i} - \ell_0) \tilde{F}_i^*].$$
\end{theorem}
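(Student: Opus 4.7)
My plan is to combine three ingredients: (a) the backtracking acceptance in Algorithm \ref{alg1}, which for the accepted $x_{k+1}$ delivers a per-index descent-lemma estimate
$$
\tilde f_i(x_{k+1},\mu_{k+1}) \le \tilde f_i(y_k,\mu_{k+1}) + \langle\nabla\tilde f_i(y_k,\mu_{k+1}),x_{k+1}-y_k\rangle + \tfrac{L_k\mu_k^{-1}}{4}\|x_{k+1}-y_k\|^2;
$$
(b) the optimality of $x_{k+1}=p_\ell(x_k,y_k,\mu_{k+1})$ for the strongly convex subproblem (\ref{4}), tested against $z=x_k$; and (c) the two-sided bound $\beta L_{\tilde f_i}\le\ell_k\le\eta L_{\tilde f_i}$ from the preceding Remark, which is exactly what produces the constants $\eta L_{\tilde f_i}/\ell_0$ and $\eta L_{\tilde f_i}$ in the target inequality.

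The first step is to exploit $\varphi_\ell(x_{k+1};x_k,y_k,\mu_{k+1})\le \varphi_\ell(x_k;x_k,y_k,\mu_{k+1})$. On the right, convexity of $\tilde f_i(\cdot,\mu_{k+1})$ collapses the gradient linearization at $y_k$ into $\tilde f_i(x_k,\mu_{k+1})$, which combined with $g_i(x_k)$ and $-\tilde F_i(x_k,\mu_{k+1})$ cancels inside the max, leaving $\varphi_\ell(x_k;x_k,y_k,\mu_{k+1})\le \tfrac{\ell}{2}\|x_k-y_k\|^2$. Substituting back into the inequality for $x_{k+1}$ and adding the accepted descent-lemma estimate yields, per index $i$,
$$
\tilde F_i(x_{k+1},\mu_{k+1}) \le \tilde F_i(x_k,\mu_{k+1}) + \tfrac{\ell}{2}\|x_k-y_k\|^2 + \bigl(\tfrac{L_k\mu_k^{-1}}{4}-\tfrac{\ell}{2}\bigr)\|x_{k+1}-y_k\|^2,
$$
where the last term is non-positive once the backtracking has accepted $L_{k+1}$. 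Using Definition \ref{def1}(v) to pass from $\mu_{k+1}$ to $\mu_k$ at cost $\kappa(\mu_k-\mu_{k+1})$, and the extrapolation identity $\|x_k-y_k\|=\theta_k\|x_k-x_{k-1}\|$, produces a recursion
$$
\tilde F_i(x_{k+1},\mu_{k+1}) \le \tilde F_i(x_k,\mu_k) + \kappa(\mu_k-\mu_{k+1}) + \tfrac{\ell}{2}\theta_k^2\|x_k-x_{k-1}\|^2.
$$

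To close this recursion I would invoke a Lyapunov telescoping of Proposition \ref{prop5.2}: weighting (\ref{15}) and (\ref{16}) with $z=x^*$ by $(t_{k+1}-1)$ and $1$ respectively, the identity $L_{k+1}^{-1}\mu_{k+1}t_{k+1}(t_{k+1}-1)=L_k^{-1}\mu_k t_k^2$ from Lemma \ref{tk}(ii) causes the $W$-terms to telescope, yielding a uniform bound on $\sum_j\|x_{j+1}-y_j\|^2$ in terms of $\|x_0-x^*\|^2$ and the smoothing error budget $\sum_j \kappa\mu_j$. Combined with $\theta_{k+1}^2<1$ from Lemma \ref{tk}(v), this keeps the cumulative displacement in the recursion summable. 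Substituting the constants from the Remark ($\ell\le\eta L_{\tilde f_i}\mu_{k+1}^{-1}$ and $\ell_0\ge\beta L_{\tilde f_i}$) and telescoping $\kappa(\mu_k-\mu_{k+1})$ over $k$ up to $\kappa\mu_0$ then rearranges the accumulated bound into the stated form, with the multiplicative factor $\eta L_{\tilde f_i}/\ell_0$ appearing as the worst-case ratio of $\ell_k$ values and the term $(\eta L_{\tilde f_i}-\ell_0)\tilde F_i^*$ coming from the Pareto-optimal anchor in the Lyapunov step.

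The main obstacle is precisely this last step: closing the recursion in spite of the non-monotone behaviour of accelerated iterates. Because $\tilde F_i$ need not decrease monotonically under the extrapolation $y_k=x_k+\theta_k(x_k-x_{k-1})$, the Lyapunov weighting must be tuned so that the gradient-mapping terms cancel exactly against the displacement injected by $y_k$, and the non-negativity $W_k(x^*)\ge -\kappa\mu_k$ (up to smoothing error) must be used to absorb the residual. It is this delicate bookkeeping, rather than any ad-hoc estimate, that forces the particular constants $\eta L_{\tilde f_i}/\ell_0$, $\kappa\mu_0$, and $(\eta L_{\tilde f_i}-\ell_0)\tilde F_i^*$ in the final conclusion.
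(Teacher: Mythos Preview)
Your recursion up to
\[
\tilde F_i(x_{k+1},\mu_{k+1}) \le \tilde F_i(x_k,\mu_k) + \kappa(\mu_k-\mu_{k+1}) + \tfrac{\ell_k}{2}\theta_k^2\|x_k-x_{k-1}\|^2
\]
is fine in spirit, but the closure step is where the argument breaks. The Lyapunov combination of Proposition~\ref{prop5.2} that you describe does \emph{not} yield a bound on $\sum_j\|x_{j+1}-y_j\|^2$; after the three-point identity it collapses to $\|u_{k+1}\|^2-\|u_k\|^2$, which telescopes to control $W_k$, not the displacement sum. Even if it did, the result would carry a $\|x_0-z\|^2$ term for some weak Pareto point $z$, and no such term appears in the statement. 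Most importantly, your explanation of the constant $(\eta L_{\tilde f_i}-\ell_0)\tilde F_i^*$ as ``coming from the Pareto-optimal anchor'' is incorrect: nothing about $X^*$ or Assumption~\ref{a1} enters this theorem (indeed Assumption~\ref{a1} \emph{uses} the constant $C_0$ produced here, so invoking that machinery would be circular).

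The paper's proof is far more direct and hinges on precisely the term you discarded. Instead of dropping the $-\tfrac{\ell}{2}\|x_{k+1}-y_k\|^2$ contribution, one keeps the KKT-based estimate in the form
\[
\tilde F_i(x_q,\mu_q)-\tilde F_i(x_{q+1},\mu_{q+1}) \ge \tfrac{\ell_q}{2}\bigl[\|x_{q+1}-x_q\|^2-\theta_q^2\|x_q-x_{q-1}\|^2\bigr],
\]
obtained via the polarization identity $\|x_{q+1}-y_q\|^2+2\langle x_{q+1}-y_q,y_q-x_q\rangle=\|x_{q+1}-x_q\|^2-\|y_q-x_q\|^2$. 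Now subtract $\tilde F_i^*$ from both function values and divide by $\ell_q$: the mismatch between $\ell_q^{-1}$ and $\ell_{q+1}^{-1}$ on the left produces exactly the $(\ell_{q+1}^{-1}-\ell_q^{-1})\tilde F_i^*$ term, which after summation and the bounds $\ell_0\le\ell_q\le\eta L_{\tilde f_i}$ gives the $((\eta L_{\tilde f_i})^{-1}-\ell_0^{-1})\tilde F_i^*$ in the statement. The right-hand side telescopes directly and is nonnegative because $\theta_1=0$ (since $t_0=1$) and $\theta_q^2<1$ by Lemma~\ref{tk}(v). No Lyapunov weighting, no reference point $z$, and no summability of displacements is needed.
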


\begin{proof}
	Let $i = 1,\dotsb,m$ and $q \geq 1$.So we get
	$$
	\begin{aligned}
		\tilde{F}_i(x_q, \mu_q) - \tilde{F}_i(x_{q+1}, \mu_{q+1})
		&\geq - \max_{i = 1, \dotsb, m} [\tilde{F}_i(x_{q+1}, \mu_{q+1}) -\tilde{F}_i(x_{q}, \mu_{q}) ].
	\end{aligned}	
	$$
	We can use the similar arguments used in the proof of Lemma 3.2 in \cite{wu2023smoothing} to get
	\begin{equation}\label{eq26}
		\begin{aligned}
			\tilde{F}_i(x_q, \mu_{q+1}) - \tilde{F}_i(x_{q+1}, \mu_{q+1})&\geq \frac{L_{k+1}\mu_{k+1}^{-1}}{2} [ 2 \langle x_{q+1} - y_{q},y_{q} - x_q\rangle + \parallel x_{q+1} - y_{q}\parallel^2 ]. \\
		\end{aligned}
	\end{equation}
	Note that this inequality also holds for $q=0$.
	
	From  inequality (\ref{eq26}),we infer that
	$$
	\begin{aligned}
		\tilde{F}_i(x_q, \mu_q) - \tilde{F}_i(x_{q+1}, \mu_{q+1})\geq \frac{L_{q+1}\mu_{q+1}^{-1}}{2} [ \parallel x_{q + 1} - x_q \parallel^2 - \theta_{q}^2\parallel x_{q} - x_{q-1}\parallel^2 ],\\
	\end{aligned}
	$$
	where the inequality holds from
	\begin{equation}\label{eqprop}
		-\|a - b\|^2 + 2 \langle b - a, b - c \rangle = -\|a - c\|^2 + \|b - c\|^2
	\end{equation}
	with $(a,b,c) =  (y_{q},x_{q+1},x_q)$, the second term holds from the definition of $y_{q}$.Then
	$$
	\begin{aligned}
		\tilde{F}_i(x_q, \mu_q) &- \tilde{F}_i(x_{q+1}, \mu_{q+1}) = (\tilde{F}_i(x_q, \mu_q) - \tilde{F}_i^*)-(\tilde{F}_i(x_{q+1}, \mu_{q+1})-\tilde{F}_i^*)\\
		&\geq  \frac{L_{q+1}\mu_{q+1}^{-1}}{2} [ \parallel x_{q + 1} - x_q \parallel^2 - \theta_{q}^2\parallel x_{q} - x_{q-1}\parallel^2 ],
	\end{aligned}
	$$
	where $\tilde{F}_i^* = \arg\min_{z \in \mathbb{R}^n} \tilde{F}_i(z) $.Let $\ell_{q} =L_{q+1}\mu_{q+1}^{-1}$, and then $\ell_{q} \leq \ell_{q+1}$. With some calculations, we have:
	$$
	\begin{aligned}
		2\ell_{q}^{-1}\tilde{F}_i(x_q,\mu_q)  - 2 \ell_{q+1}^{-1}\tilde{F}_i(x_{q+1}, \mu_{q+1}) 
		+(\ell_{q+1}^{-1}-\ell_q^{-1})\tilde{F}_i^*\geq \parallel x_{q + 1} - x_q \parallel^2 - \theta_{q}^2\parallel x_{q} - x_{q-1}\parallel^2 .
	\end{aligned}
	$$

	Applying this inequality repeatedly, we have
	\begin{equation}\label{fq}
		\begin{aligned}
			&2 \ell_1^{-1}\tilde{F}_i(x_1,\mu_1) - 2 \ell_{k}^{-1}\tilde{F}_i(x_k,\mu_k) +((\eta L_{\tilde{f}_i})^{-1} - \ell_0^{-1}) \tilde{F}_i^*\\
			&\geq \parallel x_k - x_{k-1}\parallel^2 - \parallel x_1 - x_0 \parallel^2 + \sum_{q=1}^{k-1} (1 - \theta_{q}^{2})\parallel x_q - x_{q-1}  \parallel^2.
		\end{aligned}
	\end{equation}
	Since $(1 -  \theta_{q}^{2}) = 1$ with $q=1$, the above inequality reduces to
	$$
	\begin{aligned}
		&2 \ell_1^{-1}\tilde{F}_i(x_1,\mu_1) - 2 \ell_{k}^{-1}\tilde{F}_i(x_k,\mu_k)  +((\eta L_{\tilde{f}_i})^{-1} - \ell_0^{-1}) \tilde{F}_i^*\\
		\geq& \parallel x_k - x_{k-1}\parallel^2 + \sum_{q=2}^{k-1} (1 - \theta_q^2) \parallel x_q - x_{q-1} \parallel^2 \geq 0.
	\end{aligned}	
	$$
	Moreover, (\ref{eq26}) with $q=0$ and the fact that $y_0=x_0$ imply $\tilde{F}_i(x_1,\mu_1) \leq  \tilde{F}_i(x_0,\mu_0) + \kappa\mu_0$, so we can conclude that
	$$\tilde{F}_i(x_k, \mu_k) \leq \frac{\eta \ell_{\tilde{f}_i}}{\ell_0}\tilde{F}_i(x_0, \mu_0) + \eta\ell_{\tilde{f}_i}[\kappa\mu_0+((\eta L_{\tilde{f}_i})^{-1} - \ell_0^{-1}) \tilde{F}_i^*].$$
\end{proof}

\begin{theorem}\label{th5.4}
	Suppose $\left\{x_k\right\}$ and $\left\{y^k\right\}$ be the sequences generated by SAPGM, for any $z\in\mathbb{R}^n$, it holds that
$$
u_0(x_{k+1}) \leq 
\begin{cases}
	C k^{-\sigma}, & \text{if } 0 < \sigma < 1, \\
	C \frac{\ln k}{k}, & \text{if } \sigma = 1, \\
	C k^{\sigma - 2}, & \text{if } 1 < \sigma < 2,
\end{cases}
$$
\end{theorem}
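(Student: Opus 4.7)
The plan is to adapt the classical FISTA-style Lyapunov argument to the smoothed multiobjective setting. Starting from Proposition \ref{prop5.2}, I would multiply inequality (\ref{15}) by $(t_{k+1}-1)$ and add (\ref{16}), then scale the result by $2t_{k+1}L_{k+1}^{-1}\mu_{k+1}$. The identity in Lemma \ref{tk}(ii), $L_{k+1}^{-1}\mu_{k+1}t_{k+1}(t_{k+1}-1)=L_k^{-1}\mu_k t_k^2$, rewrites the scaled left-hand side as a telescoping difference of $2L_k^{-1}\mu_k t_k^2 W_k(z)$. Introducing the auxiliary iterate $u_{k+1}:=t_{k+1}x_{k+1}-(t_{k+1}-1)x_k$ and invoking the extrapolation rule $y_k=x_k+\theta_{k+1}(x_k-x_{k-1})$ with $\theta_{k+1}=(t_k-1)/t_{k+1}$, the surviving inner-product terms collapse (via the polarization identity (\ref{eqprop})) into $\|u_{k+1}-z\|^2-\|u_k-z\|^2$, while the smoothing residual $2\kappa\mu_{k+1}$ from (\ref{16}) becomes an error $4\kappa t_{k+1}L_{k+1}^{-1}\mu_{k+1}^2$.

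Telescoping the resulting recursion
\begin{equation*}
	2L_{k+1}^{-1}\mu_{k+1}t_{k+1}^2 W_{k+1}(z)+\|u_{k+1}-z\|^2\leq 2L_k^{-1}\mu_k t_k^2 W_k(z)+\|u_k-z\|^2+4\kappa t_{k+1}L_{k+1}^{-1}\mu_{k+1}^2
\end{equation*}
from $j=0$ to $k$, with initialization $u_0=x_0$ and $y_0=x_0$ killing the boundary term, yields
\begin{equation*}
	2L_{k+1}^{-1}\mu_{k+1}t_{k+1}^2 W_{k+1}(z)\leq \|x_0-z\|^2+4\kappa\sum_{j=0}^{k}t_{j+1}L_{j+1}^{-1}\mu_{j+1}^2.
\end{equation*}
I would then specialize $z=z^{*}$, where $z^{*}$ is the weak Pareto point attaining the supremum in the definition of $u_0(x_{k+1})$; by Theorem \ref{fx0} every iterate $x_k$ lies in $\mathcal{L}_F(C_0\tilde F(x_0,\mu_0))$, so Assumption \ref{a1} applies and $\|x_0-z^{*}\|^2$ is controlled as part of $R$.

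The error sum is controlled via Lemma \ref{tk}(iv), $t_{j+1}L_{j+1}^{-1}\mu_{j+1}^2\leq \tfrac{4\mu_0^2}{L_0(2-\sigma)}(j+1)^{1-2\sigma}$, and summing $(j+1)^{1-2\sigma}$ over $j$ produces exactly the three regimes that define $C_1$ in Assumption \ref{a1}: $k^{2-2\sigma}$ for $0<\sigma<1$, $\ln k$ for $\sigma=1$, and $O(1)$ for $1<\sigma<2$. The matching lower bound $t_{k+1}^2L_{k+1}^{-1}\mu_{k+1}\geq c(k+1)^{2-\sigma}$ follows from Lemma \ref{tk}(iii) together with the Remark's uniform bound on $\ell_k$. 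Dividing the telescoped inequality through yields the rate on $W_{k+1}(z^{*})$, which transfers to $u_0(x_{k+1})$ via the smoothing bound $|\tilde F_i-F_i|\leq\kappa\mu$: indeed $u_0(x_{k+1})=\min_i[F_i(x_{k+1})-F_i(z^{*})]\leq W_{k+1}(z^{*})$, and in each regime the dominant term matches the stated $k^{-\sigma}$, $(\ln k)/k$, or $k^{\sigma-2}$ rate (the residual $\mu_{k+1}=O(k^{-\sigma})$ decays at least as fast and is absorbed into the constant $C$).

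The main obstacle is the algebraic bookkeeping in the square-completion step: forcing the cross terms to collapse exactly into $\|u_{k+1}-z\|^2-\|u_k-z\|^2$ requires simultaneously exploiting the $\theta_{k+1}$ extrapolation identity and Lemma \ref{tk}(ii), and the time-varying weight $L_k\mu_k^{-1}$ means the canonical FISTA coefficient $t_k^2/L$ has to be replaced throughout by $t_k^2\mu_k/L_k$, which then interacts nontrivially with the decaying $\mu_{k+1}$ error. Verifying that the resulting weighted telescoping produces precisely the three-regime structure encoded in $C_1$ of Assumption \ref{a1}, rather than merely a uniform $R$ bound, is the most delicate part of the accounting.
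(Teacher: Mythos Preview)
Your proposal is correct and follows essentially the same route as the paper: combine (\ref{15}) and (\ref{16}) with the FISTA weights, use Lemma~\ref{tk}(ii) to obtain a telescoping recursion in $L_k^{-1}\mu_k t_k^2 W_k(z)+\|u_k-z\|^2$, control the accumulated smoothing error via Lemma~\ref{tk}(iv) and the growth of $L_k^{-1}\mu_k t_k^2$ via Lemma~\ref{tk}(iii) together with the Remark's uniform bound on $\ell_k$, and finally invoke Theorem~\ref{fx0} and Assumption~\ref{a1} to pass from $W_{k+1}$ to $u_0$. One small correction: the boundary contribution is not literally ``killed'' by $y_0=x_0$ --- the paper instead stops the telescope at $2L_0^{-1}\mu_0 t_0^2 W_1+\|u_1\|^2$ and bounds this quantity by $\|x_0-z\|^2+4\kappa L_0^{-1}\mu_0^2$ via a direct application of (\ref{16}) at $k=0$, which is the step that actually uses $y_0=x_0$.
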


\begin{proof}
	Let$((\ref{15}) \times (t_{k+1} - 1)+(\ref{16})) \times t_{k+1}$, we get:
	\begin{equation}\label{reswkwk1}
		\begin{aligned}
			2L_k^{-1}\mu_k t_k(t_{k} - 1)W_{k}  - 2L_k^{-1}\mu_k t_{k}^2W_{k+1} &\geq  2t_{k}\langle x_{k+1}-y_{k}, t_{k} y_k - (t_{k}-1)x_k - z\rangle \\	
			&\quad+t_{k}^2\|x_{k+1}-y_{k}\|^{2}-4L_k^{-1}t_k \kappa \mu_k^2,
		\end{aligned}
	\end{equation}
	
	With equality (\ref{eqprop}) and Lemma \ref{tk} (ii), we get
	$$
	2L_{k-1}^{-1}\mu_{k-1} t_{k-1}^2W_{k}  - 2L_k^{-1}\mu_k t_{k}^2W_{k+1} \geq \parallel u_{k+1} \parallel^2 - \parallel u_k \parallel^2 - 4\kappa L_k^{-1} t_k \mu_k^2,
	$$
	where $u_k = t_{k-1} x_k - (t_{k-1} - 1)x_{k-1} - z$.
	
	Applying this inequality repeatedly, we have
	
	\begin{equation}
		2L_k^{-1}\mu_k t_{k}^2W_{k+1} + \parallel u_{k+1} \parallel^2 \leq 2L_0^{-1}\mu_0 t_{0}^2W_{1} + \parallel u_1 \parallel^2 + 4\kappa  \sum_{i=1}^{k} L_i^{-1}t_i \mu_i^2
	\end{equation}
	
	And we can find that
	\begin{equation}
		2L_0^{-1}\mu_0 t_{0}^2W_{1} + \parallel u_1 \parallel^2  = 2L_0^{-1}\mu_0 W_{1} + \parallel x_1 - z \parallel^2 \leq \parallel x_0 - z \parallel^2 + 4\kappa L_0^{-1} \mu_0^2.
	\end{equation}
	
	Now we need to find the upper bounded of $\sum_{i=1}^{k} t_i \mu_i^2$ and the down bounded of $t_k^2 \mu_k$. From Lemma \ref{tk} (iv), we know that
	\begin{equation}
		\begin{aligned}
			\sum_{i=1}^{k}L_i^{-1}t_i\mu_i^2 &\leq \frac{4\mu_0^2}{L_0(2 - \sigma)} \sum_{i=1}^{k} i^{1 - 2 \sigma} \\
			&\leq 
			\begin{cases}
				\frac{4 \mu_0^2}{L_0(2-\sigma)(2-2\sigma)}(k+1)^{2-2\sigma}, &if \ 0 < \sigma <1, \\
				\frac{4\mu_0^2}{L_0(2-\sigma)} (\ln k + 1), &if \ \sigma = 1, \\
				\frac{4\mu_0^2(2\sigma - 1)}{L_0(2 - \sigma) ( 2\sigma - 2)}, &if \ 1 < \sigma < 2.
			\end{cases}
		\end{aligned}
	\end{equation}
	
	From Lemma \ref{tk} (iii), for fixed $\sigma \in (0,2)$,there exists $K \in \mathbb{N}$  such that $(k+1)^{1 - \frac{\sigma}{2}} \geq 2, \forall k \geq K$, so
	$$
	L_k^{-1}t_k^2 \mu_k \geq \frac{\mu_0}{4L_0(2-\sigma)^2} (k+1)^{2-\sigma}, \forall k \geq K.
	$$
	
	Hence, there exists a constant $C > 0$ such that:
	
	$$
	W_{k+1} \leq 
	\begin{cases}
		C k^{-\sigma}, & \text{if } 0 < \sigma < 1, \\
		C \frac{\ln k}{k}, & \text{if } \sigma = 1, \\
		C k^{\sigma - 2}, & \text{if } 1 < \sigma < 2,
	\end{cases}
	$$
	
	Use the same proof of [1] and Assumption (\ref{a1}), we get that
	$$
	u_0(x_{k+1}) \leq 
	\begin{cases}
		C k^{-\sigma}, & \text{if } 0 < \sigma < 1, \\
		C \frac{\ln k}{k}, & \text{if } \sigma = 1, \\
		C k^{\sigma - 2}, & \text{if } 1 < \sigma < 2,
	\end{cases}
	$$

\end{proof}


%
%
%
%

\section{Numerical Experiments}

In this section, we present the numerical results of the algorithm on different problems. All numerical experiments were implemented using Python 3.10 and executed on a personal computer equipped with a 12th-generation Intel(R) Core(TM) i7-12700H CPU @ 2.70 GHz and 16GB of RAM. For the algorithm, we set $L_0 = 1$, $\mu_0 = 1$, $\beta = 2$, and the error tolerance $\epsilon = 10^{-3}$ to ensure termination within a finite number of iterations. Additionally, the maximum number of iterations was set to 1000.

For ease of comparison, Table \ref{tab 3} provides detailed information on all test problems. "SAPGM" represents Algorithm 2, while "DNNM" refers to the algorithm from \cite{gebken2021efficient}. The notation $L$ corresponds to $g(x) = \frac{1}{n} \parallel x \parallel_1$, whereas $0$ represents $g(x) = 0$. The second and third columns of Table \ref{tab 3} specify the forms of $f$ and $g$, the fourth and fifth columns indicate the dimensions of the variables and objective functions, the sixth and seventh columns present the lower and upper bounds for the variables $x$, and the eighth column provides the source of the test problems.
	
\begin{table}[htbp]
	\centering
	\caption{Test Problems}
	\label{tab 1}
	\begin{tabular}{cccccccc}
		\toprule
		Index & $f$ & $g$ & $n$ & $m$ & Lower Bound & Upper Bound & Reference \\
		\midrule
		1  & BK1   & L & 2 & 2 & (-5, -5) & (10, 10) & \cite{huband2006review} \\
		2  & CB3\&LQ   & L & 2 & 2 & (1.5, 1.5) & (2, 2) & \cite{lukvsan2000test} \\
		3  & CB3\&MF1   & L & 2 & 2 & (0, 0) & (1, 1) & \cite{lukvsan2000test} \\
		4  & CR\&MF2   & L & 2 & 2 & (1.5, 1.5) & (2, 2) & \cite{lukvsan2000test} \\
		5 & JOS1  & L & 2 & 2 & (-5, -5) & (5, 5) & \cite{jin2001dynamic} \\
		6 & SP1   & L & 2 & 2 & (2, -2) & (3, 3) & \cite{huband2006review} \\
		\bottomrule
	\end{tabular}
	\label{tab 3}
\end{table}

\begin{table}[htbp]
	\centering
	\caption{Objective functions of test problems}\label{test_problem}
	\begin{tabular}{ll}
		\toprule
		Index & function  \\
		\midrule
		\midrule
		CR \& MF2 &
		$\begin{aligned}
			f_1(\mathbf{x}) &= \max \{ x_1^2 + (x_2 - 1)^2 + x_2 - 1, -x_1^2 - (x_2 - 1)^2 + x_2 + 1\} \\
			f_2(\mathbf{x}) &= -x_1 + 2(x_1^2 + x_2^2 - 1) + 1.75 |x_1^2 + x_2^2 - 1|
		\end{aligned}$
		\\
		
		\midrule
		CB3 \& LQ &
		$\begin{aligned}
			f_1(\mathbf{x}) &= \max \{ x_{1}^{4} + x_{2}^{2}, (2 - x_{1})^{2} + (2 - x_{2})^{2}, 2e^{x_{2} - x_{1}} \} \\
			f_2(\mathbf{x}) &= \max \{ -x_{1} - x_{2}, -x_{1} - x_{2} + x_{1}^{2} + x_{2}^{2} - 1 \}
		\end{aligned}$
		\\
		
		\midrule
		CB3 \& MF1 &
		$\begin{aligned}
			f_1(\mathbf{x}) &= \max \{ x_{1}^{4} + x_{2}^{2}, (2 - x_{1})^{2} + (2 - x_{2})^{2}, 2e^{x_{2} - x_{1}} \} \\
			f_2(\mathbf{x}) &= -x_{1} + 20 \max \{ x_{1}^{2} + x_{2}^{2} - 1, 0 \}
		\end{aligned}$
		\\
		
		\midrule
		JOS1  &
		$\begin{aligned}
			f_1(\mathbf{x}) &= \frac{1}{n} \sum_{i=1}^{n} x_i^2 \\
			f_2(\mathbf{x}) &= \frac{1}{n} \sum_{i=1}^{n} (x_i - 2)^2 \\
			f_3(\mathbf{x}) &= \parallel \mathbf{x} \parallel_1
		\end{aligned}$
		\\
		
		\midrule
		BK1  &
		$\begin{aligned}
			f_1(\mathbf{x}) &= x_1^2 + x_2^2 \\
			f_2(\mathbf{x}) &= (x_1 - 5)^2 + (x_2 - 5)^2 \\
			f_3(\mathbf{x}) &= \parallel \mathbf{x} \parallel_1
		\end{aligned}$
		\\
		
		\midrule
		SP1  &
		$\begin{aligned}
			f_1(\mathbf{x}) &= (x_1 - 1)^2 + (x_1 - x_2)^2 \\
			f_2(\mathbf{x}) &= (x_2 - 3)^2 + (x_1 - x_2)^2 \\
			f_3(\mathbf{x}) &= \parallel \mathbf{x} \parallel_1
		\end{aligned}$
		\\
		
		\bottomrule
	\end{tabular}
\end{table}
Next, we compare the performance of the SAPGM algorithm with the DNNM algorithm from \cite{gebken2021efficient}. For each problem, 200 experiments were conducted using the same initial points to evaluate the performance of different algorithms. The initial points were randomly selected within the specified lower and upper bounds. From these 200 runs, the average runtime (Avg Time), average number of iterations (Avg Iter), and average number of function evaluations (Avg Feval) were recorded in Table \ref{tab 4}.

\begin{table}[htbp]
	\centering
	\caption{Comparison Results of SAPGM and DNNM}
	\begin{tabular}{lccc ccc}
		\toprule
		Problem & \multicolumn{3}{c}{SAPGM} & \multicolumn{3}{c}{DNNM} \\
		\cmidrule(lr){2-4} \cmidrule(lr){5-7}
		& Avg Time & Avg Iter & Avg Feval & Avg Time & Avg Iter & Avg Feval \\
		\midrule
		1     & \textbf{0.17}  & \textbf{57.53}  & \textbf{104.59} & 6.56  & 1000.00  & 3897.67  \\
		2  & \textbf{0.07}  & \textbf{51.63}  & \textbf{61.33}  & 2.05  & 1000.00  & 1027.84  \\
		3 & \textbf{1.36}  & \textbf{483.85} & \textbf{494.96} & 2.18  & 886.23   & 891.63   \\
		4  & \textbf{0.05}  & \textbf{40.76}  & \textbf{52.94}  & 1.71  & 1000.00  & 1009.61  \\
		5     & \textbf{0.06}  & \textbf{32.82}  & \textbf{54.12}  & 0.66  & 241.72   & 261.67   \\
		6     & \textbf{1.70}  & \textbf{379.27} & \textbf{746.77} & 2.03  & 984.75   & 1005.67  \\
		\bottomrule
	\end{tabular}
	\label{tab 4}
\end{table}
As shown in Table \ref{tab 4}, the SAPGM algorithm demonstrates significant advantages in terms of runtime, the number of iterations, and function evaluations. This suggests that the proposed combination of the backtracking strategy with the smoothing framework effectively solves non-smooth multi-objective optimization problems while avoiding the challenges associated with subgradient computations.

The Pareto frontiers obtained by the two algorithms for different test problems are illustrated in Figure \ref{pareto res}. The red points represent the results obtained by the SAPGM algorithm, while the blue points correspond to the results from the DNNM algorithm.

\begin{figure}[htbp]
	\centering 	
	\subfigure[CB3\&LQ\_l1]{
		\includegraphics[width=0.3\textwidth]{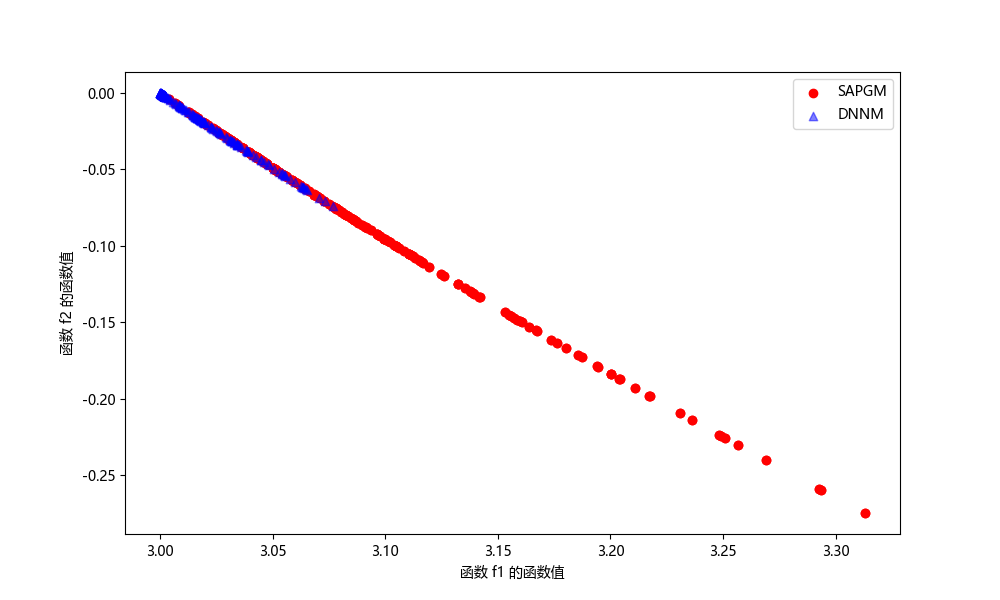}
	}
	\hfill
	\subfigure[CB3\&MF1\_l1]{
		\includegraphics[width=0.3\textwidth]{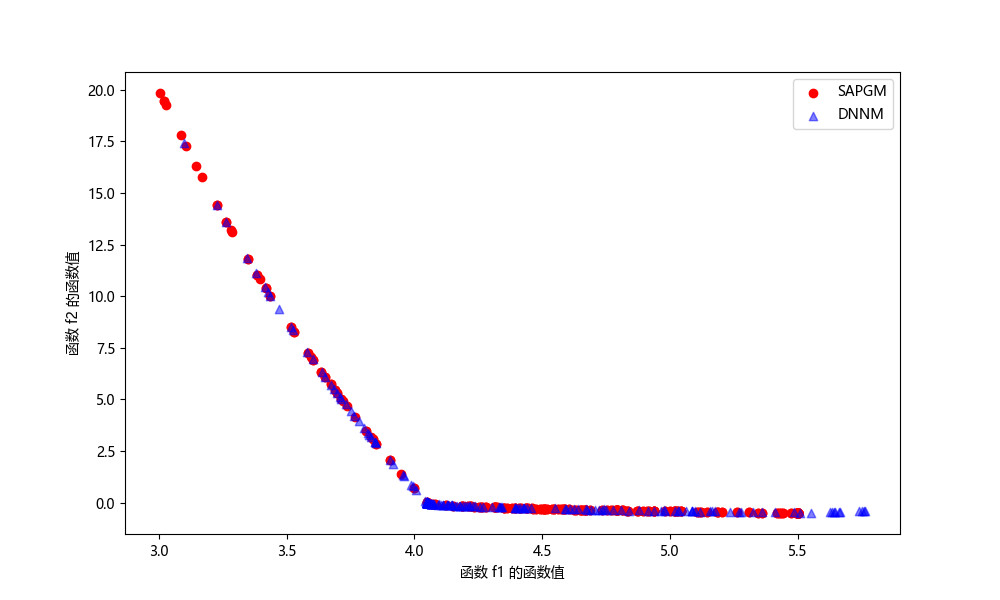}
	}
	
	\subfigure[JOS1]{
		\includegraphics[width=0.3\textwidth]{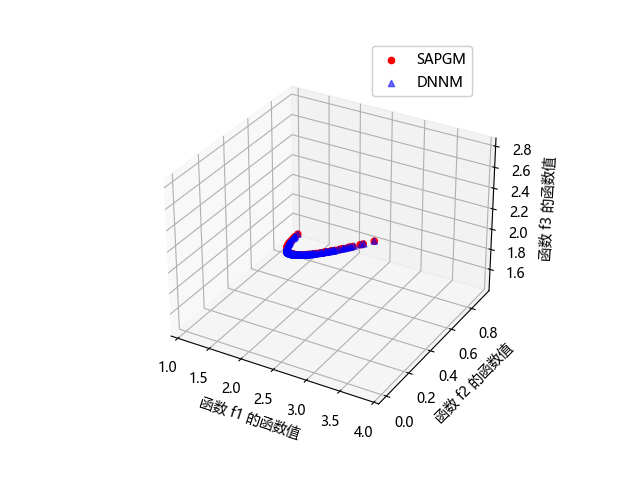}
	}
	\hfill
	\subfigure[BK1]{
		\includegraphics[width=0.3\textwidth]{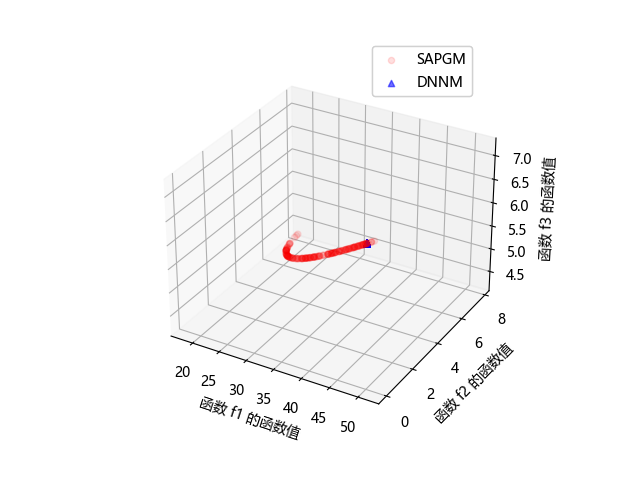}
	}
	\hfill
	\subfigure[SP1]{
		\includegraphics[width=0.3\textwidth]{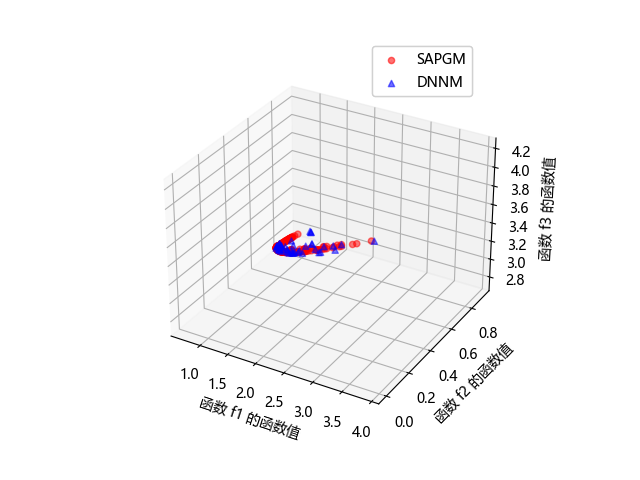}
	}
	\centering
	\caption{Comparison of Pareto frontiers obtained by SAPGM and DNNM on different problems.}
	\label{pareto res}
\end{figure}

To evaluate the efficiency of the obtained Pareto frontiers, we present the performance metrics of SAPGM and DNNM in terms of average iteration count, average CPU time, and average function evaluations, as shown in Figure \ref{perf2}.

\begin{figure}[htbp]
	\centering 	
	\subfigure[Performance Metric: Average Iterations]{
		\includegraphics[width=0.3\textwidth]{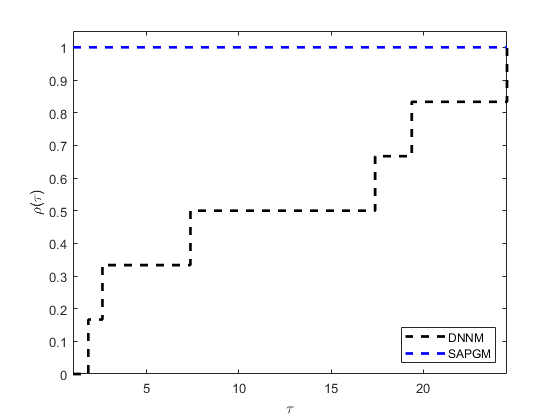}
	}
	\hfill
	\subfigure[Performance Metric: Average CPU Time]{
		\includegraphics[width=0.3\textwidth]{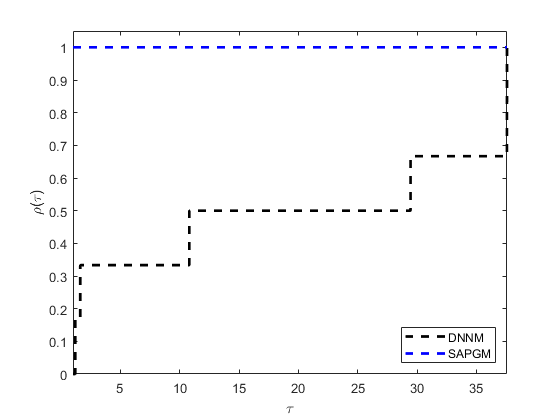}
	}
	\hfill
	\subfigure[Performance Metric: Average Function Evaluations]{
		\includegraphics[width=0.3\textwidth]{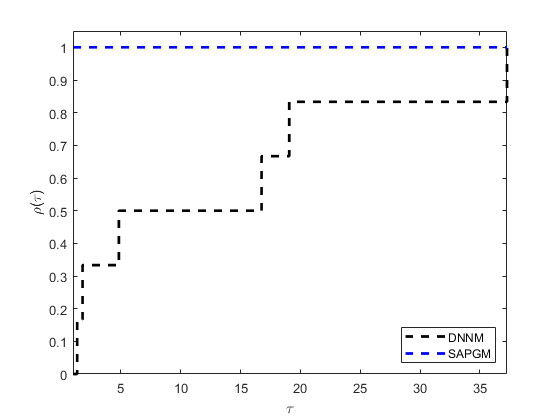}
	}
	\caption{Performance evaluation metrics of SAPGM and DNNM.}
	\label{perf2}
\end{figure}

\end{document}